\documentclass{amsart}

\usepackage{amsmath,arydshln,multirow}

\usepackage{hyperref}

\usepackage{cases}
\usepackage{amsmath}
\usepackage{amsfonts}
\usepackage{bm}
\usepackage{arydshln}
\usepackage{amsfonts,amsmath,amssymb,amscd,bbm,amsthm,mathrsfs,dsfont}
\usepackage{mathrsfs}
\usepackage{pb-diagram}
\usepackage{amssymb}

\newtheorem{Thm}{Theorem}[section]

\newtheorem{Lem}[Thm]{Lemma}

\newtheorem{Def}[Thm]{Definition}

\newtheorem{proposition-definition}[Thm]{Proposition-Definition}

\newtheorem{conj}[Thm]{Conjecture}

\theoremstyle{remark}
\newtheorem{Rem}[Thm]{Remark}

\numberwithin{equation}{section}

\begin{document}

\title{A finiteness theorem for mod $p$ Galois representations over global function fields}


	\author{Yufan Luo}
	
	\subjclass[2020]{}
	
	\dedicatory{}
	\subjclass[2020]{primary 11F80, 11R32}
	\keywords{Galois representations, global function fields}
	\address{Shanghai Institute for Mathematics and Interdisciplinary Sciences (SIMIS), Shanghai 200433, China
	}
	
	\address{Research Institute of Intelligent Complex Systems, Fudan University, Shanghai
		200433, China}
	\email{yufanluo@hotmail.com}

	\maketitle
	\begin{abstract}
		Let $p$ be an odd prime number and let $\overline{\mathbb{F}}_p$ be a fixed algebraic closure of the finite field of order $p$. Let $K$ be a global function field of characteristic different from $p$ and let $G_{K}$ be the absolute Galois group of $K$. We prove that there are only finitely many isomorphism classes of continuous geometric semisimple representations $\rho:G_{K}\to \mathrm{GL}_{n}(\overline{\mathbb{F}}_{p})$ such that their Artin conductors are bounded. It is worth emphasizing that we do not need to assume that $p$ does not divide $n$.
	\end{abstract}

	\section{Introduction}
	Let $p$ be a prime number and let $\overline{\mathbb{F}}_p$ denote an algebraic closure of the finite field of order $p$. The following conjecture was independently proposed by Khare \cite{MR1751924} and Moon \cite{MR1782427}:

	\begin{conj}
	Let $K$ be a number field, $n$ be a positive integer and $\mathfrak{R}$ be a nonzero ideal of the ring of integers of $K$. Let $G_{K}$ be the absolute Galois group of $K$. Then there are only finitely many isomorphism classes of continuous semisimple representations 
	\[ \rho:G_{K}\to \mathrm{GL}_{n}(\overline{\mathbb{F}}_{p}) \]
	such that the prime-to-$p$ Artin conductor $\mathfrak{R}(\rho)$ of $\rho$ is bounded by $\mathfrak{R}$. 
	\end{conj}
	
	In \cite[Theorem 2]{MR1838373}, Moon and Taguchi established the conjecture under the assumption that the image of $\rho$ is solvable. They also formulated the following function field analogue of this conjecture:
	
	\begin{conj}\label{Kharemoonconjectre}
		Let $K$ be a global function field of characteristic different from $p$ with absolute Galois group $G_{K}$. Let $n$ be a positive integer and $\mathfrak{R}$ an effective divisor of $K$. Then there are only finitely many isomorphism classes of continuous semisimple representations 
		\[
		\rho: G_{K} \to \mathrm{GL}_{n}(\overline{\mathbb{F}}_{p})
		\]
		whose Artin conductor $\mathfrak{R}(\rho)$ divides $\mathfrak{R}$, and such that $\rho$ is geometric, i.e., the fixed field of the kernel of $\rho$ contains no constant field extension of $K$.
	\end{conj}

	In \cite[Theorem 4]{MR1838373}, Moon and Taguchi established this conjecture provided that the image of $\rho$ is solvable. Furthermore, Böckle and Khare \cite[Corollary 1.7 and Remark 1.8]{MR2218896} verified the conjecture under the assumptions that $\mathrm{im}(\rho)$ is large, $p \nmid n$, and $\rho$ is everywhere tamely ramified. Additionally, assuming $p \nmid n$, an affirmative answer is implicit in the proof of \cite[Proposition E.10.1]{MR3762002}, as well as in the proofs of Lemma 6.4 and Theorem 6.8 in \cite{MR4014290}.
	
   The main result of this paper is as follows, which removes the restriction that $p \nmid n$.
	
	\begin{Thm}\label{main}
	Conjecture \ref{Kharemoonconjectre} holds provided that $p$ is odd or $n=2$.
	\end{Thm}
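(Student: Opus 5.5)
Since $K$ has characteristic $\ell\neq p$, the plan is to reduce to a finiteness statement for $\overline{\mathbb{Q}}_p$-valued (or $\overline{\mathbb{Z}}_p$-valued) representations, which is controlled by the Langlands correspondence for $\mathrm{GL}_n$ over function fields (L. Lafforgue) together with finiteness of cuspidal automorphic forms of bounded level.

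\textbf{Step 1: bounding ramification.} Fix $\rho$ with $\mathfrak{R}(\rho)\mid\mathfrak{R}$. The bound on the conductor bounds the Swan conductors at all places. Any image of wild inertia lies in a finite $\ell$-subgroup of $\mathrm{GL}_n(\overline{\mathbb{F}}_p)$, whose order is bounded in terms of $n$ (Jordan/Minkowski-type bounds, since $\ell\ne p$). Combining this with the Hermite--Minkowski analogue for function fields (finitely many separable extensions of bounded degree and bounded discriminant, with fixed constant field), I expect to find a single finite extension $K'/K$, depending only on $(n,\mathfrak{R},p)$, over which every such $\rho|_{G_{K'}}$ is tamely ramified. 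Moreover, the tame inertia then acts through pro-prime-to-$\ell$ quotients. If $p\nmid n$ one could twist to trivial determinant, which is where the known arguments use this hypothesis. Instead I will keep the determinant. It is a character of bounded conductor, and geometricity forces it to have finite order in a bounded set. This holds by class field theory: $\mathrm{Pic}^0$ is finite, and the degree part is killed by geometricity only up to finitely many choices, once $\rho$ is twisted by an unramified character of $\mathrm{Gal}(\bar{\mathbb F}_q K/K)$. I will make the twisting precise by normalizing the Frobenius eigenvalue at one place.

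\textbf{Step 2: lifting.} Decompose $\rho=\bigoplus\rho_i$ into irreducibles. Each $\rho_i$ has bounded dimension and conductor, so it suffices to treat irreducible $\rho$. For irreducible $\rho$ I want a lift to $\overline{\mathbb{Z}}_p$ with controlled conductor. Since $\ell\ne p$, this amounts to showing that the universal deformation ring with fixed inertial type and fixed determinant has a $\overline{\mathbb{Q}}_p$-point. Here I would invoke the de Jong / Gaitsgory / Böckle--Khare results: the deformation ring is finite flat over $\mathbb{Z}_p$. The obstruction-theoretic argument gives $\dim\ge$ the expected dimension from the global Euler characteristic formula, and that argument uses $H^2(\mathrm{ad}^0)$. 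When $p\mid n$, $\mathrm{ad}^0$ is not a direct summand of $\mathrm{ad}$. I expect this to be the \textbf{main obstacle}.

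To get around it, I would work with the full $\mathrm{ad}\rho$ and a fixed-determinant condition imposed via a fiber product. Alternatively I would use Gaitsgory's proof of de Jong's conjecture, which gives the lift for $\mathrm{GL}_n$ directly without the $p\nmid n$ hypothesis. I would try the latter first, and check that it requires $p$ odd (or $n=2$), matching the hypothesis of Theorem \ref{main}. The lift $\tilde\rho$ can be chosen with Artin conductor dividing a divisor $\mathfrak{R}'$ that depends only on $\mathfrak{R}$ and $n$. Because inertia images have order prime to $p$ after Step 1, their representations lift uniquely up to isomorphism, so the conductor is preserved.

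\textbf{Step 3: counting.} By Lafforgue, the lifts $\tilde\rho$ (irreducible, determinant of finite order, conductor dividing $\mathfrak{R}'$) correspond to cuspidal automorphic representations of $\mathrm{GL}_n(\mathbb{A}_K)$ with level $K_1(\mathfrak{R}'$) and central character in a finite set. These are finite in number, by finiteness of cusp forms of fixed level and central character (Harder). Reduction mod $p$ then recovers $\rho$ by Brauer--Nesbitt. Hence there are finitely many $\rho$ up to the unramified twists fixed in Step 1, which proves the theorem.
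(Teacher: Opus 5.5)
Your overall architecture matches the paper's: produce a characteristic-$0$ lift using de Jong's conjecture, then count lifts via Lafforgue and finiteness of cusp forms. However, the step you yourself flag as the main obstacle is left open, and neither of your workarounds closes it.

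\emph{Gaitsgory does not give the lift.} Gaitsgory's theorem proves de Jong's conjecture, i.e.\ finite geometric monodromy for $\mathbb{F}((t))$-coefficients. Via de Jong's argument this only shows that $R^{\eta}_{\overline\rho}/pR^{\eta}_{\overline\rho}$ is Artinian, given that $\overline\rho|_{\pi_1(\overline X)}$ is absolutely irreducible (which follows from geometricity). To get a $\overline{\mathbb{Q}}_p$-point you still need $\dim R^{\eta}_{\overline\rho}\ge 1$. That lower bound is exactly where de Jong's Theorem 3.5 used $p\nmid n$.

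\emph{The fibre-product route fails precisely when $p\mid n$.} Splitting the unrestricted ring as $R^{\eta}_{\overline\rho}\,\widehat{\otimes}\,\Lambda$, with $\Lambda$ the deformation ring of $\det\overline\rho$, requires $n$-th roots of characters with values in $1+\mathfrak m_R$. Imposing $\det=\eta$ directly on the unrestricted ring (expected dimension $2$) can cost more than one equation, since $\Lambda$ sees the $p$-torsion of $\pi_1(X)^{\mathrm{ab}}$.

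\emph{The missing idea is a short count.} For the fixed-determinant functor:
\begin{enumerate}
\item The tangent space is $Z^1(\operatorname{ad}^0)/B^1(\operatorname{ad})$, because one conjugates by all of $1+\epsilon M_n(\mathbb F)$. Its dimension is $h^1(\operatorname{ad}^0)-h^0(\operatorname{ad}^0)$.
\item Obstructions lie in $H^2(\operatorname{ad}^0)$: choose set-theoretic lifts with determinant $\eta$, so no splitting of $\operatorname{ad}^0\subset\operatorname{ad}$ is needed.
\item For affine $X$, the Euler characteristic identity $h^0-h^1+h^2=0$ holds for $\operatorname{ad}^0$ whether or not it is a direct summand.
\end{enumerate}
When $p\mid n$ the extra $h^0(\operatorname{ad}^0)=1$ (scalars have trace $0$) is exactly absorbed by the tangent space. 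Hence $R^{\eta}_{\overline\rho}\cong W(\mathbb F)[[X_1,\dots,X_s]]/(f_1,\dots,f_s)$, every component has dimension $\ge1$, and de Jong's argument gives finite flatness over $W(\mathbb F)$.

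Steps 1--2 also contain false claims that you should drop rather than repair.
\begin{itemize}
\item Finite $\ell$-subgroups of $\mathrm{GL}_n(\overline{\mathbb F}_p)$ are not of bounded order, since $\mu_{\ell^k}\subset\overline{\mathbb F}_p^\times$ for all $k$.
\item Even after killing wild inertia, tame inertia has a $\mathbb{Z}_p(1)$ quotient. So inertia images can have order divisible by $p$ (unipotent monodromy), and ``lifts uniquely, conductor preserved'' fails.
\item Fixing an inertial type changes the deformation problem. The generators-equals-relations count is available only for unrestricted deformations of $\pi_1(X)$ with $X$ affine.
\end{itemize}
None of this is needed. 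The Swan conductor depends only on the pro-$\ell$ wild inertia, so $\operatorname{Sw}(\rho)=\operatorname{Sw}(\overline\rho)$ for any lift. Lafforgue's finiteness then applies with bounded Swan conductor and fixed finite-order determinant. The Artin conductor of the lift is automatically at most $\operatorname{Sw}+n$ at each point of $\operatorname{Supp}(\mathfrak R)$.

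Nor is any unramified twisting needed; your closing ``finitely many up to twists'' would need extra work to give finitely many isomorphism classes. Instead, $\det\overline\rho$ is a geometric character with conductor dividing $\mathfrak R$, and there are finitely many such. Its Teichm\"uller lift serves as the determinant $\eta$ of the lift.
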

    
    Our proof is based on de Jong’s conjecture \cite{MR1818381} and its solution, which was established by Gaitsgory for $p>2$ in \cite{MR2342444} and by de Jong for $n \leq 2$. It is worth noting that to remove the assumption that $p \nmid n$, a further refinement of the techniques in \cite{MR1818381} is required. By replacing the original methods with a computation of the universal deformation ring via the global Euler--Poincaré characteristic, we are able to remove the assumption that $p \nmid n$.
    
  Throughout this paper, $p$ is a rational prime, $\mathbb{F}_{p}$ denotes the finite field of order $p$, and $\mathbb{Q}_{p}$ is the field of $p$-adic numbers. We fix algebraic closures $\overline{\mathbb{F}}_{p}$ and $\overline{\mathbb{Q}}_{p}$ of $\mathbb{F}_{p}$ and $\mathbb{Q}_{p}$, respectively.
    
	\section{Proof of Theorem \ref{main}}
	\subsection{A conjecture of de Jong}
	Let $k$ be a finite field of characteristic $\ell$ with $\ell\neq p$. Let $X$ be a geometrically connected, smooth curve over $k$. Let $\pi_{1}(X)$ denote the arithmetic fundamental group of $X$. Then we have the following exact sequence of profinite groups:
	\[ 1\to \pi_{1}(\overline{X})\to \pi_{1}(X)\to G_{k}\to 1,\]
	where $\overline{X}$ denotes the base change of $X$ to an algebraic closure of $k$ and $G_{k}$ denotes the absolute Galois group of $k$. (As usual, base points are omitted from the notation.)
	
	The following celebrated conjecture was proposed by de Jong in \cite{MR1818381}:
	\begin{conj}\label{dejong}
	We keep the notation as above. Suppose that $\rho:\pi_{1}(X)\to \mathrm{GL}_{n}(F)$ is a continuous representation of $\pi_{1}(X)$ where $F$ is a local field of characteristic $p$. Then $\rho(\pi_{1}(\overline{X}))$ is finite.
	\end{conj}
	
	\begin{Thm}\label{dejonggait}
		Conjecture \ref{dejong} holds provided that $p$ is odd or $n=2$.
	\end{Thm}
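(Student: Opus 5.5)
This theorem is not proved from scratch in the paper; it records the known resolution of Conjecture \ref{dejong}. Accordingly, the plan is to reduce the statement to the published results of Gaitsgory (for $p>2$) and de Jong (for $n\le 2$), and to indicate the structure of their argument.

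The first step is a reduction to a deformation-theoretic statement. Let $F$ be a local field of characteristic $p$, with ring of integers $\mathcal{O}_F\cong \mathbb{F}_q[[t]]$. By compactness, $\rho$ may be conjugated into $\mathrm{GL}_n(\mathcal{O}_F)$. Its reduction $\bar\rho$ has finite image, so after replacing $X$ by a finite étale cover and $k$ by a finite extension, we may assume that $\bar\rho|_{\pi_1(\overline X)}$ is trivial or at least semisimple. Filtering $\rho$ by Jordan--Hölder constituents over $F$ then reduces the problem to the absolutely irreducible case, since finiteness of geometric monodromy passes to extensions by the usual argument on unipotent radicals; I expect this to need a small extra argument. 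Following de Jong \cite{MR1818381}, consider the universal deformation ring $R$ of $\bar\rho|_{\pi_1(\overline X)}$ with its Frobenius action. The conjecture becomes the claim that no Frobenius-fixed $\mathbb{F}_q[[t]]$-point of $\operatorname{Spf} R$ exists with infinite image. De Jong shows that this follows if the Frobenius-twisted deformations are ``rigid'': each Frobenius-stable formal branch of the deformation space, on which Frobenius acts by a contraction, must be a point.

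The second step is to establish this rigidity through the geometric Langlands correspondence. Over $\overline X$, Gaitsgory uses the geometric Langlands correspondence in characteristic $\ell$ with $\overline{\mathbb{Q}}_\ell$ replaced by an $\mathbb{F}_p$- or $\overline{\mathbb{F}}_p((t))$-linear setting. He attaches to the universal deformation a Hecke eigensheaf on $\mathrm{Bun}_n$, with coefficients in the deformation ring, via Frenkel--Gaitsgory--Vilonen's construction. Frobenius-invariance of the deformation point gives a Weil structure on the eigensheaf. Its trace function then yields a function on $\mathrm{Bun}_n(k)$ with values in a finite-type object, which forces the family to be constant. The construction uses vanishing theorems valid only for the relevant coefficient characteristic, which is where the hypothesis $p>2$ enters. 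For $n\le 2$, de Jong instead uses Drinfeld's construction for $\mathrm{GL}_2$, which avoids the restriction.

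The main obstacle is the second step: making geometric Langlands work with modular coefficients and families over $\mathbb{F}_q[[t]]$. For the paper I would not reproduce it. I would simply cite \cite{MR2342444} for $p$ odd and \cite{MR1818381} for $n\le 2$, noting that these combine to give Theorem \ref{dejonggait}.
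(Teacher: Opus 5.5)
Your plan matches the paper's in substance: both prove the statement by citing de Jong \cite{MR1818381} for $n=2$ and Gaitsgory \cite{MR2342444} for $p$ odd. What you leave out is the one step the paper performs before invoking Gaitsgory: the reduction to a \emph{projective} curve via \cite[Proposition 2.13]{MR1818381}. Gaitsgory's theorem concerns local systems on a smooth projective curve, and your own sketch with Hecke eigensheaves on $\mathrm{Bun}_n$ tacitly assumes a complete curve and an unramified local system. Conjecture \ref{dejong}, however, is stated for an arbitrary smooth curve $X$, and the paper needs it precisely for affine $X$ (Theorem \ref{lifting} and the proof of Theorem \ref{realmaintheorem}). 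Your first-step reductions (finite \'etale covers, passing to Jordan--H\"older constituents) never address the ramification at the missing points. So, as written, your citation for $p>2$ does not cover the case that is actually used.

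The gap is easy to close. Either cite \cite[Proposition 2.13]{MR1818381} as the paper does, or argue directly as follows.
After a finite \'etale cover trivializing $\overline{\rho}$, the representation takes values in the pro-$p$ group $\ker(\mathrm{GL}_n(\mathcal{O}_F)\to\mathrm{GL}_n(\mathbb{F}_q))$. Since wild inertia is pro-$\ell$ and $\ell\neq p$, the inertia at a missing point $x$ acts through the pro-$p$ part $\mathbb{Z}_p(1)$ of tame inertia. For a topological generator $\sigma$, the Frobenius relation shows that $\rho(\sigma)$ is conjugate to $\rho(\sigma)^{q_x}$, where $q_x$ is the size of the residue field at $x$. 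Hence the eigenvalues of $\rho(\sigma)$ are roots of unity, automatically of order prime to $p$ in characteristic $p$, and congruent to $1$ modulo the maximal ideal, so they equal $1$. Thus $\rho(\sigma)$ is unipotent, and $\rho(\sigma)^{p^M}=1$ once $p^M\ge n$.
Now pull back to a finite cover $Y$ of the smooth compactification of $X$ whose ramification indices above the missing points are divisible by $p^M$; this cover is tame since $p\neq\ell$. By Abhyankar's lemma the pullback of $\rho$ extends to an everywhere unramified representation on $Y$, so Gaitsgory's theorem applies there. Finiteness then descends to $X$, because the geometric image over $Y$ has finite index in $\rho(\pi_1(\overline{X}))$.
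Your heuristic description of the deformation-theoretic and geometric Langlands mechanism is not needed for the citation and does not bear on this point.
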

	\begin{proof}
	If $n=2$, then Conjecture \ref{dejong} holds by \cite[Corollary 4.10]{MR1818381}. Now suppose that $p>2$. By \cite[Proposition 2.13]{MR1818381}, we may assume that $X$ is projective. Then our claim follows from \cite{MR2342444}.
	\end{proof}
	\begin{Rem}
		Böckle and Khare independently verified the conjecture in the case where the associated residual representation has large image (see \cite{MR2218896}).
	\end{Rem}
	
 Let $\mathbb{F}$ be a finite field of characteristic $p$ and $W(\mathbb{F})$ be the Witt vector ring of $\mathbb{F}$. Let $\mathcal{C}$ be the category of complete Noetherian local $W(\mathbb{F})$-algebras $R$ with residue field $R/\mathfrak{m}_{R}=\mathbb{F}$ where $\mathfrak{m}_{R}$ denotes the maximal ideal of $R$. 
 
 Suppose that $\overline{\rho}:\pi_{1}(X)\to \mathrm{GL}_{n}(\mathbb{F})$ is an absolutely irreducible continuous representation, and that $\eta:\pi_{1}(X)\to \mathrm{GL}_{1}(W(\mathbb{F}))=W(\mathbb{F})^{\times}$ is a continuous character such that $\eta \equiv \det(\overline{\rho}) \pmod p$. Consider the functor from $\mathcal{C}$ to the category of sets which maps $R$ to the set of equivalence classes of continuous representations $\rho_{R}:\pi_{1}(X)\to \mathrm{GL}_{n}(R)$ such that $\rho_{R}\pmod{\mathfrak{m}_{R}} = \overline{\rho}$ and $\det(\rho_{R})=(W(\mathbb{F})^{\times}\to R^{\times})\circ \eta$. Here, we say that two representations $\rho_{R}$ and $\rho_{R}'$ are equivalent if $\rho_{R}=B \rho'_{R}B^{-1}$ for some matrix $B\in \mathrm{GL}_{n}(R)$ congruent to the identity matrix modulo $\mathfrak{m}_{R}$. Mazur proved that this functor is representable by an object $R_{\overline{\rho}}^{\eta}\in \mathcal{C}$ (see \cite{MR1012172}, \cite[Section 3]{MR1818381}, and \cite[Proposition 5.1.1]{MR3184335}).
 
	\begin{Lem}\label{refinemnetlemma}
		We keep the notation as above. Assume that $X$ is affine. Then the ring $R_{\overline{\rho}}^{\eta}$ is isomorphic to 
		\[ W(\mathbb{F})[[X_{1},\cdots,X_{s}]]/(f_{1},\cdots,f_{s}),\]
		for some integer $s$.
	\end{Lem}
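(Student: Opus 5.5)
Standard approach: the deformation ring has a presentation $W(\mathbb{F})[[X_1,\dots,X_{h^1}]]/(f_1,\dots,f_{h^2})$ with $h^i=\dim_{\mathbb{F}} H^i(\pi_1(X),\mathrm{ad}^0\overline{\rho})$, where $\mathrm{ad}^0$ denotes trace-zero matrices. When $p\mid n$, $\mathrm{ad}^0$ is not a direct summand of $\mathrm{ad}$. So I will instead work with the fixed-determinant deformation functor and its tangent/obstruction theory directly: the tangent space is $H^1(\pi_1(X),\mathrm{ad}^0\overline{\rho})$ and obstructions lie in $H^2(\pi_1(X),\mathrm{ad}^0\overline{\rho})$. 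This holds because the determinant condition lifts via $\eta$, and the map $\mathrm{ad}\to\mathbb{F}$ given by the trace is surjective, so the trace-zero part governs the fibres of $\det$.

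The plan is therefore the following. First, I will recall the general presentation (Mazur, \cite[Section 3]{MR1818381}): since $\overline{\rho}$ is absolutely irreducible, the scalars equal $H^0$ of $\mathrm{ad}$. By choosing a minimal lift of a basis of the tangent space, one gets $R^{\eta}_{\overline{\rho}}\cong W(\mathbb{F})[[X_1,\dots,X_{h^1}]]/J$, and $J$ is generated by at most $h^2$ elements. One should verify that the obstruction class for fixed determinant lands in $H^2(\mathrm{ad}^0)$. I will do this by taking a set-theoretic lift with the correct determinant, which is possible because $\det$ is smooth and $\eta$ exists. Second, I will compute $h^1-h^2$ via the Euler--Poincaré characteristic. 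For affine $X$ over a finite field of characteristic $\ell\neq p$, $\pi_1(X)$ has $p$-cohomological dimension $2$, and the cohomology groups are finite. The Euler characteristic is
\[ \chi(M)=\sum_i(-1)^i\dim H^i(\pi_1(X),M)=0 \]
for every finite $p$-torsion module $M$ unramified outside $X$. This comes from the Hochschild--Serre sequence with $G_k\cong\widehat{\mathbb{Z}}$: we have $H^i(\pi_1(X),M)$ built from $H^0(G_k,H^j(\overline{X},M))$ and $H^1(G_k,H^j(\overline{X},M))$, and for a finite module over $\widehat{\mathbb{Z}}$ these two have equal size. Hence $h^0-h^1+h^2=0$. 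Affineness gives $H^2(\overline{X},M)=0$, so $H^3=0$ and the sum stops.

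Third, I will compute $h^0=\dim H^0(\pi_1(X),\mathrm{ad}^0\overline{\rho})$. By absolute irreducibility and Schur, the invariants of $\mathrm{ad}$ are the scalars. Scalars have trace $n\cdot$, which vanishes when $p\mid n$. So $h^0=1$ if $p\mid n$ and $h^0=0$ otherwise. Consequently $h^1=h^2+h^0\ge h^2$.

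The final step pads the presentation to get equal numbers of variables and relations. Let $s=h^1$. We have at most $h^2\le s$ relations, and we append zeros as dummy relations $f_i=0$ to reach exactly $s$ of them. This is the main subtle point. The statement only asserts the shape $W[[X_1..X_s]]/(f_1..f_s)$, so padding with zeros is allowed, and the conclusion follows.

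The main obstacle is the $p\mid n$ case. There, $h^0(\mathrm{ad}^0)=1$, which contributes an extra obstruction-like term, and one must check that the obstruction theory in $H^2(\mathrm{ad}^0)$ is still valid even though $\mathrm{ad}^0$ is not a summand. I would handle this by a direct cocycle computation: the fixed-determinant obstruction is the image of the usual obstruction under the non-split map. Exactness of $0\to\mathrm{ad}^0\to\mathrm{ad}\to\mathbb{F}\to0$ together with liftability of $\det$ to $\eta$ shows that the obstruction comes from $H^2(\mathrm{ad}^0)$.
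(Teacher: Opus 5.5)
There is a genuine gap in your first step: when $p\mid n$, the tangent space of the fixed-determinant functor is \emph{not} $H^1(\pi_1(X),\operatorname{ad}^0(\overline{\rho}))$.

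A first-order deformation $(1+\epsilon c)\overline{\rho}$ with determinant $\eta$ does have $c\in Z^1(\operatorname{ad}^0(\overline{\rho}))$. However, strict equivalence allows conjugation by $1+\epsilon A$ with $A\in M_n(\mathbb{F})$ arbitrary, not only trace-zero $A$. So the tangent space is $Z^1(\operatorname{ad}^0(\overline{\rho}))/B^1(\operatorname{ad}(\overline{\rho}))$. When $p\mid n$ the identity matrix is trace-zero, so $\dim B^1(\operatorname{ad}^0(\overline{\rho}))=n^2-2$ while $\dim B^1(\operatorname{ad}(\overline{\rho}))=n^2-1$. Equivalently, the connecting map $H^0(\mathbb{F})\to H^1(\operatorname{ad}^0(\overline{\rho}))$ coming from $0\to\operatorname{ad}^0\to\operatorname{ad}\xrightarrow{\mathrm{tr}}\mathbb{F}\to 0$ is injective, because the trace kills scalars. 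Its image consists of the classes $g\mapsto \overline{\rho}(g)A\overline{\rho}(g)^{-1}-A$ with $\operatorname{tr}A=1$, and these are trivial deformations. Hence the tangent dimension is $m=h^1-h^0$, not $h^1$. This computation is exactly what the paper does, and it is the real content of the $p\mid n$ case. Your argument that obstructions land in $H^2(\operatorname{ad}^0(\overline{\rho}))$ is correct, but that is not where the difficulty lies. The $h^0=1$ term does not add an obstruction; it removes a tangent direction.

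Consequently your intermediate claim fails for $p\mid n$. You assert a presentation of $R^{\eta}_{\overline{\rho}}$ with $h^1$ variables and at most $h^2=h^1-1$ relations. Krull's height theorem would then give $\dim R^{\eta}_{\overline{\rho}}/p\geq 1$. That contradicts the finiteness of $R^{\eta}_{\overline{\rho}}$ over $W(\mathbb{F})$ that de Jong's argument extracts from Conjecture \ref{dejong} in Theorem \ref{lifting}.

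The zero-padding step only masks this. The point of the lemma, as used in Theorem \ref{lifting}, is that the number of relations genuinely matches the number of variables, so that $(p,f_1,\dots,f_s)$ can be a system of parameters.

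The repair is short, and your Euler characteristic computation $h^0-h^1+h^2=0$ and your computation of $h^0$ are both correct and are what the paper uses. Take the minimal presentation with $m=h^1-h^0$ variables and at most $h^2$ relations (Mazur/B\"ockle); the Euler characteristic then gives $m=h^2$, so $s=h^2$. Alternatively, if you insist on $h^1$ variables, that presentation is non-minimal and needs $h^0$ extra linear relations, i.e.\ $h^2+h^0=h^1$ relations in total. That also has the stated shape, but not by the argument you gave.
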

	\begin{proof}
		Let $\operatorname{ad}(\overline{\rho})$ denote the adjoint representation of $\overline{\rho}$ and let $\operatorname{ad}^{0}(\overline{\rho})$ be the subrepresentation on the trace zero matrices. We put $h_{i}:=\dim_{\mathbb{F}}H^{i}(\pi_{1}(X),\operatorname{ad}^{0}(\overline{\rho}))$. By \cite[Proposition 5.1.1]{MR3184335}, the ring $R_{\overline{\rho}}^{\eta}$ is isomorphic to 
		\[ W(\mathbb{F})[[X_{1},\cdots,X_{m}]]/(f_{1},\cdots,f_{h_{2}}),\]
		where $m$ is the dimension of the dual of the mod $p$ tangent space of $R_{\overline{\rho}}^{\eta}$. That is,
		\[ m = \dim_{\mathbb{F}}Z^1( \operatorname{ad}^{0}(\overline{\rho})) - \dim_{\mathbb{F}} B^1(\operatorname{ad}(\overline{\rho})), \]
		where $Z^{1}$ (resp. $B^1$) denotes the $\mathbb{F}$-vector space of $1$-cocycles (resp. the $\mathbb{F}$-vector space of $1$-coboundaries). 
		
		Since $h_{1} = \dim_{\mathbb{F}} Z^1(\operatorname{ad}^{0}(\overline{\rho})) - \dim_{\mathbb{F}} B^1(\operatorname{ad}^{0}(\overline{\rho}))$, we obtain that 
		 \[ m = h_{1} - \left( \dim_{\mathbb{F}} B^1(\operatorname{ad}(\overline{\rho})) - \dim_{\mathbb{F}} B^1(\operatorname{ad}^{0}(\overline{\rho})) \right). \]
	   Since 
		\[ \dim_{\mathbb{F}} B^1(\operatorname{ad}(\overline{\rho})) - \dim_{\mathbb{F}} B^1(\operatorname{ad}^{0}(\overline{\rho}))= (n^2 - 1) - [(n^2 - 1) - h_{0}] = h_{0}, \]
	  we conclude that $m = h_{1} - h_{0}$. Since $X$ is affine, by the global Euler--Poincar\'{e} characteristic \cite[Chapter I, Theorem 5.1]{MR2261462}, we have $h_{1}-h_{0}=h_{2}$. By setting $s = h_{2}$, the claim follows.
	\end{proof}

	We prove the following theorem, which generalizes the result \cite[Theorem 3.5]{MR1818381} of de Jong by removing the assumption that $p$ does not divide $n$.

	\begin{Thm}\label{lifting}
		We keep the notation as above. Assume that
		\begin{enumerate}
			\item $X$ is affine;
			\item Conjecture \ref{dejong} holds for $X$ and $n$;
			\item The restriction $\overline{\rho}|_{\pi_{1}(\overline{X})}$ is absolutely irreducible.
		\end{enumerate}
		 Then the morphism $W(\mathbb{F})\to R_{\overline{\rho}}^{\eta}$ is a finite flat complete intersection morphism. In particular, $\overline{\rho}$ can be lifted to a representation $\rho:\pi_{1}(X)\to \mathrm{GL}_{n}(\overline{\mathbb{Q}}_{p})$.
	\end{Thm}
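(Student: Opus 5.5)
The strategy is to combine Lemma \ref{refinemnetlemma} with a finiteness statement for $R:=R_{\overline{\rho}}^{\eta}$ over $W(\mathbb{F})$, and then to apply a standard commutative algebra criterion. By Lemma \ref{refinemnetlemma}, $R\cong W(\mathbb{F})[[X_{1},\dots,X_{s}]]/(f_{1},\dots,f_{s})$. Suppose we know that $R/pR$ is a finite-dimensional $\mathbb{F}$-vector space, i.e.\ $\dim R/pR=0$. Then $\mathbb{F}[[X_1,\dots,X_s]]/(\bar f_1,\dots,\bar f_s)$ has dimension $0$. Since $\mathbb{F}[[X_1,\dots,X_s]]$ is regular of dimension $s$ and the quotient by $s$ elements has dimension $0$, the sequence $\bar f_1,\dots,\bar f_s$ is a system of parameters, hence a regular sequence. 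It follows that $p,f_1,\dots,f_s$ is a regular sequence in the regular local ring $W(\mathbb{F})[[X_1,\dots,X_s]]$ of dimension $s+1$. Consequently $f_1,\dots,f_s$ is a regular sequence, $R$ is a complete intersection of dimension $1$, and $p$ is a nonzerodivisor on $R$, so $R$ is flat over $W(\mathbb{F})$. Finiteness follows from Nakayama's lemma for complete local rings: $R/pR$ is finite over $\mathbb{F}$, so $R$ is finite over $W(\mathbb{F})$. The lifting assertion then follows. $R$ is finite flat and nonzero over $W(\mathbb{F})$, so $R[1/p]\neq 0$ and it has a $\overline{\mathbb{Q}}_p$-point. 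Composing the universal deformation with this point gives a lift $\pi_1(X)\to\mathrm{GL}_n(\overline{\mathbb{Q}}_p)$ of $\overline{\rho}$.

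The main step, and the expected obstacle, is to show that $R/pR$ is Artinian using hypothesis (2). This is essentially de Jong's argument in \cite[Theorem 3.5]{MR1818381}, and the plan is to follow it. Suppose $\dim R/pR\geq 1$. Then there is a prime $\mathfrak{q}$ of $R/pR$ with $A:=(R/pR)/\mathfrak{q}$ a one-dimensional complete local domain of characteristic $p$. Its normalization is $\mathbb{F}'[[t]]$ for a finite extension $\mathbb{F}'$ of $\mathbb{F}$. Pushing the universal representation forward gives a continuous representation $\rho_A:\pi_1(X)\to\mathrm{GL}_n(F)$ with $F=\mathbb{F}'((t))$, a local field of characteristic $p$. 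By hypothesis (2), $\rho_A(\pi_1(\overline{X}))$ is finite.

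We then need to derive a contradiction with the fact that $\rho_A$ is a genuinely nonconstant deformation. Since $A\to F$ is injective, the image point is not the closed point, so the traces of $\rho_A$ generate a ring strictly larger than $\mathbb{F}'$. On the finite-index open subgroup $H=\ker(\rho_A|_{\pi_1(\overline{X})})$, the restriction $\rho_A|_H$ is trivial. Moreover $\rho_A|_{\pi_1(\overline{X})}$ factors through a finite group and reduces to $\overline{\rho}|_{\pi_1(\overline{X})}$, which is absolutely irreducible by hypothesis (3). I expect the resulting rigidity to force $\rho_A|_{\pi_1(\overline{X})}$ to be conjugate over $\mathcal{O}_F$ to a representation with values in a finite field. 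For this, I would argue that a deformation of an absolutely irreducible representation of the finite group $\pi_1(\overline{X})/H$ is determined by its character, and that this character is the reduction of finitely many possibilities; the $t$-adic continuity then makes it constant. Here one must take care because $p$ may divide the order of the finite group. To handle this, I would argue with traces: the traces lie in the finite set of traces of matrices of order dividing $|\pi_1(\overline{X})/H|$ in $\mathrm{GL}_n$ lifting those of $\overline{\rho}$.

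Once $\rho_A|_{\pi_1(\overline{X})}$ is constant, Schur's lemma applied via hypothesis (3) shows that $\rho_A$ equals $\overline{\rho}\otimes\chi$ for a character $\chi$ of $G_k$ valued in $1+t\mathcal{O}_F$. The determinant condition forces $\chi^n=1$. This is exactly where $p\mid n$ enters: $\chi$ can be a nontrivial character of $p$-power order, since $1+t\mathcal{O}_F$ has no nontrivial torsion in characteristic $p$ only up to $p$-power roots, which are trivial in a field of characteristic $p$. As $x^{p}=1$ implies $x=1$ in a field of characteristic $p$, we get $\chi=1$. Then $\rho_A$ is constant, contradicting the injectivity of $A\to F$ with $\dim A=1$. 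This trace/rigidity step is where the details must be checked most carefully, since the argument replaces the role played by $p\nmid n$ in de Jong's original proof together with the complete intersection count from Lemma \ref{refinemnetlemma}.
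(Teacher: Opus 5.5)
Your proposal is correct and follows the paper's route. The paper takes the presentation with equally many generators and relations from Lemma \ref{refinemnetlemma}, obtains finiteness of $R/pR$ from Conjecture \ref{dejong} (which your argument via a one-dimensional quotient $A\hookrightarrow\mathbb{F}'[[t]]$, constancy of traces on $\pi_1(\overline{X})$, Carayol and Schur, and the determinant reconstructs), and then finishes with the regular-sequence argument; it simply imports all of this from de Jong's Section 3.14, whereas you spell it out, and your Artinian step works even when $p\mid n$ because $1+t\mathbb{F}'[[t]]$ is torsion-free in characteristic $p$, so the only place $p\nmid n$ was needed is the generator/relation count that the Lemma now supplies.
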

	\begin{proof}
		The result follows from Lemma \ref{refinemnetlemma} by applying the argument used in \cite[Section 3.14]{MR1818381}.
	\end{proof}
	
	\subsection{Ramification theory}
	In this subsection, we review the theory of ramification, for which our primary reference is \cite{KR14}. Let $K$ be a function field of characteristic $\ell\neq p$ and $G_{K} $ be the absolute Galois group of $K$. For every prime divisor $\mathfrak{q}$ of $K$, let $G_{\mathfrak{q}} \subset G_K$ be a decomposition group at $\mathfrak{q}$, and let $(G_{\mathfrak{q}}^{\lambda})_{\lambda\in \mathbb{R}_{\geq -1}}$ be the ramification filtration of $G_{\mathfrak{q}}$ in the upper numbering and $G_{\mathfrak{q}}^{\lambda+}$ be the topological closure of $\bigcup_{\lambda'>\lambda}G_{\mathfrak{q}}^{\lambda'}$ in $G_{\mathfrak{q}}$ for $\lambda\in \mathbb{R}_{\geq 0}$. We recall the following definitions.

\begin{Def}[{\cite[Definition 4.82]{KR14}; see also \cite[Section 3]{MR3058662}}]
Suppose that $\rho: G_{K} \to \mathrm{GL}(V)$ is a continuous representation on a finite-dimensional vector space $V$ over $\overline{\mathbb{Q}}_{p}$. The \textit{Swan conductor} of $\rho$ is defined as:
\[ \operatorname{Sw}(\rho) := \prod_{\mathfrak{q}} \mathfrak{q}^{n_{0}(\mathfrak{q})},  \]
where $\mathfrak{q}$ runs over the prime divisors of $K$ and, for every $\mathfrak{q}$,
\[ n_{0}(\mathfrak{q}) :=\sum_{\lambda>0}\lambda \dim(V^{G_{\mathfrak{q}}^{\lambda+}}/V^{G_{\mathfrak{q}}^{\lambda}}), \]
where $V^{G_{\mathfrak{q}}^{\lambda+}}$ and $V^{G_{\mathfrak{q}}^{\lambda}}$ are the subspaces of $V$ fixed by $G_{\mathfrak{q}}^{\lambda+}$ and $G_{\mathfrak{q}}^{\lambda}$ respectively.
\end{Def}

\begin{Def}[{\cite[Section 4]{MR1838373}; see also \cite[Section 4.4]{KR14}}]
Suppose that $\overline{\rho}: G_{K} \to \mathrm{GL}(V)$ is a continuous representation on a finite-dimensional vector space $V$ over $\overline{\mathbb{F}}_{p}$. Let $L/K$ be a finite Galois extension such that $\overline{\rho}$ factors through $\operatorname{Gal}(L/K)$. The \textit{Swan conductor} (resp. \textit{Artin conductor}) of $\overline{\rho}$ is defined as:
\[ \operatorname{Sw}(\overline{\rho}) := \prod_{\mathfrak{q}} \mathfrak{q}^{n_{0}(\mathfrak{q})}, \quad \left( \text{resp. } \mathfrak{R}(\overline{\rho}) := \prod_{\mathfrak{q}} \mathfrak{q}^{n(\mathfrak{q})} \right), \]
where $\mathfrak{q}$ runs over the prime divisors of $K$ and, for every $\mathfrak{q}$,
\[ n_{0}(\mathfrak{q}) := \sum_{i=1}^{\infty} \frac{1}{(G_{0}(\mathfrak{q}):G_{i}(\mathfrak{q}))} \dim (V/V^{G_{i}(\mathfrak{q})}), \]
\[ \left( \text{resp. } n(\mathfrak{q}) := \sum_{i=0}^{\infty} \frac{1}{(G_{0}(\mathfrak{q}):G_{i}(\mathfrak{q}))} \dim (V/V^{G_{i}(\mathfrak{q})}) \right), \]
where $G_{i}(\mathfrak{q})$ denotes the $i$-th ramification subgroup in the lower numbering of the decomposition group of a prime of $L$ above $\mathfrak{q}$ and $V^{G_{i}(\mathfrak{q})}$ denotes the subspace of $V$ fixed by $G_{i}(\mathfrak{q})$.
\end{Def}

We also recall the following well-known compatibility of the Swan conductor with respect to the residual reduction.

\begin{Lem}\label{swanconductorisinvariant}
	Let $K$ be a function field of characteristic $\ell \neq p$. Suppose that $\rho: G_{K} \to \mathrm{GL}_n(\overline{\mathbb{Q}}_{p})$ is a continuous representation, and let $\overline{\rho}: G_{K} \to \mathrm{GL}_n(\overline{\mathbb{F}}_{p})$ be any residual representation of $\rho$. Then we have
	\[  \operatorname{Sw}(\rho) = \operatorname{Sw}(\overline{\rho}).\]
\end{Lem}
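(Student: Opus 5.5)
The plan is to reduce to a purely local statement at each prime divisor $\mathfrak{q}$ and then use the fact that wild inertia is pro-$\ell$ with $\ell\neq p$. Since both conductors are products over $\mathfrak{q}$, it suffices to show that the local exponents $n_0(\mathfrak{q})$ agree for $\rho$ and $\overline{\rho}$ at each $\mathfrak{q}$.

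First I fix $\mathfrak{q}$ and conjugate $\rho$ so that it preserves a lattice: by compactness $\rho$ takes values in $\mathrm{GL}_n(\mathcal{O}_E)$ for some finite extension $E/\mathbb{Q}_p$. Then $\overline{\rho}$ is the reduction of this lattice, up to semisimplification. Next I observe that the wild inertia subgroup $P=G_{\mathfrak{q}}^{0+}$ is pro-$\ell$, while the kernel of $\mathrm{GL}_n(\mathcal{O}_E)\to\mathrm{GL}_n(k_E)$ is pro-$p$. Hence $\rho(P)$ is a finite $\ell$-group mapping isomorphically onto its image in $\mathrm{GL}_n(k_E)$. The same holds for every $G_{\mathfrak{q}}^{\lambda}$ and $G_{\mathfrak{q}}^{\lambda+}$ with $\lambda>0$, since these lie in $P$.

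The key step is a dimension comparison. For a finite group $H$ of order prime to $p$ acting on a free $\mathcal{O}_E$-lattice $T$, the averaging idempotent $e_H=|H|^{-1}\sum_{h\in H}h$ is defined over $\mathcal{O}_E$ and commutes with reduction. Therefore
\[ \operatorname{rank}_{\mathcal{O}_E} T^H=\dim_{k_E}(T/\varpi T)^H. \]
I then write $\overline{V}=T/\varpi T$. Applying this with $H=\rho(G_{\mathfrak{q}}^{\lambda})$ and with $H=\rho(G_{\mathfrak{q}}^{\lambda+})$ gives
\[ \dim V^{G_{\mathfrak{q}}^{\lambda}}=\dim \overline{V}^{G_{\mathfrak{q}}^{\lambda}},\qquad \dim V^{G_{\mathfrak{q}}^{\lambda+}}=\dim \overline{V}^{G_{\mathfrak{q}}^{\lambda+}} \]
for all $\lambda>0$. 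Passing to the semisimplification does not change these invariant dimensions, because taking $H$-invariants is exact for $H$ of order prime to $p$. So the answer does not depend on which residual representation is chosen.

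The remaining step, which I expect to be the main obstacle, is to match the two definitions. The Swan exponent of $\overline{\rho}$ is given in lower numbering for the finite quotient $\operatorname{Gal}(L/K)$, whereas that of $\rho$ is given in upper numbering on $G_{\mathfrak{q}}$. I plan to use Herbrand's function to rewrite $\sum_{i\geq1}(G_0:G_i)^{-1}\dim(\overline{V}/\overline{V}^{G_i})$ as the Riemann–Stieltjes integral $\int_{0}^{\infty}\dim(\overline{V}/\overline{V}^{G^{\lambda}})\,d\lambda$ in the upper numbering. Here I use that upper numbering is compatible with quotients, which is the standard computation in Serre's \emph{Local Fields}, Chapter VI. This integral equals the jump sum $\sum_{\lambda>0}\lambda\dim(\overline{V}^{G^{\lambda+}}/\overline{V}^{G^{\lambda}})$, which by the equalities above coincides with $n_0(\mathfrak{q})$ for $\rho$. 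Since the image of $\rho|_P$ is finite, only finitely many jumps occur, and the sums are finite.
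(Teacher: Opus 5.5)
Your argument is correct and follows the paper's route: the paper only invokes \cite[Proposition 4.74 and Theorem 4.86]{KR14}, and your steps are the standard content of those results. Those steps are that $\rho(P)$ is finite and injects under reduction (since $P$ is pro-$\ell$ and the congruence kernel is pro-$p$), that invariant dimensions agree via the averaging idempotent over $\mathcal{O}_E$, and that Herbrand's function identifies the lower-numbering formula for $\overline{\rho}$ with the upper-numbering jump sum, with your remark on composition factors correctly covering reductions of arbitrary stable lattices.
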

\begin{proof}
	This assertion follows immediately from \cite[Proposition 4.74 and Theorem 4.86]{KR14}.
\end{proof}

\subsection{Proof of main theorem}
	The following finiteness result is a consequence of Lafforgue's work \cite{MR1875184} on the global Langlands correspondence for function fields and of results by Harder, Gelfand, and Piatetski-Shapiro (see \cite[Section 4.3]{MR3058662}).
	
	\begin{Thm}\label{Lafforgue}
		Let $X$ be a geometrically connected, smooth curve over $k$ with function field $K$. Let $n$ be a positive integer, let $\mathfrak{R}$ be an effective divisor of $K$, and let $\chi:\pi_{1}(X) \to \overline{\mathbb{Q}}_{p}^{\times}$ be a continuous character of finite order. Then there are only finitely many isomorphism classes of continuous irreducible representations
		\[ \rho: \pi_{1}(X) \to \mathrm{GL}_{n}(\overline{\mathbb{Q}}_{p}) \]
		such that $\det(\rho) = \chi$ and the Swan conductor $\operatorname{Sw}(\rho)$ is bounded by $\mathfrak{R}$.
	\end{Thm}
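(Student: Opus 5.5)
The plan is to transfer the counting problem to the automorphic side through Lafforgue's global Langlands correspondence, and then use the finite-dimensionality of spaces of cusp forms of bounded level (Harder, Gelfand and Piatetski-Shapiro).

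First, I will bound the full Artin conductor. Let $\overline{X}^{c}$ be the smooth compactification of $X$ and $S=\overline{X}^{c}\setminus X$, a finite set of places. Any $\rho$ as in the statement factors through $\pi_{1}(X)$, so it is unramified outside $S$. At $\mathfrak{q}\in S$ the Artin exponent equals the Swan exponent plus $\dim(V/V^{I_{\mathfrak{q}}})\le n$. Hence the Artin conductor of $\rho$ divides $\mathfrak{R}':=\mathfrak{R}+n\sum_{\mathfrak{q}\in S}\mathfrak{q}$, a divisor independent of $\rho$.

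Next, I will apply Lafforgue's theorem. The representation $\rho$ is irreducible and its determinant $\chi$ has finite order, so \cite{MR1875184} gives a cuspidal automorphic representation $\pi(\rho)$ of $\mathrm{GL}_{n}(\mathbb{A}_{K})$. Here I fix an isomorphism $\overline{\mathbb{Q}}_{p}\simeq\mathbb{C}$ and regard $\rho$ as an $\ell'$-adic representation in the usual way. The correspondence has the following properties:
\begin{enumerate}
\item the central character of $\pi(\rho)$ is the idele class character attached to $\chi$ by class field theory;
\item at every place, $\pi(\rho)_{\mathfrak{q}}$ corresponds to $\rho|_{G_{\mathfrak{q}}}$ (Frobenius-semisimplified) under the local Langlands correspondence, with matching $L$- and $\varepsilon$-factors;
\item $\rho\mapsto\pi(\rho)$ is injective on isomorphism classes.
\end{enumerate}
Property (3) follows from Chebotarev density together with strong multiplicity one.

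Because local conductors are read off from $\varepsilon$-factors, the conductor of $\pi(\rho)$ equals the Artin conductor of $\rho$, and so it divides $\mathfrak{R}'$. By the theory of newforms (Jacquet, Piatetski-Shapiro and Shalika), $\pi(\rho)$ then has a nonzero vector fixed by the open compact subgroup $K_{1}(\mathfrak{R}')\subset\mathrm{GL}_{n}(\mathbb{A}_{K}^{\infty})$, where $\mathbb{A}_{K}^{\infty}=\mathbb{A}_{K}$ since $K$ has no archimedean places. It is therefore a constituent of the space of cusp forms on $\mathrm{GL}_{n}(K)\backslash\mathrm{GL}_{n}(\mathbb{A}_{K})/K_{1}(\mathfrak{R}')$ with fixed central character $\omega_{\chi}$.

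Finally, I will invoke finite-dimensionality. By Harder's result and that of Gelfand and Piatetski-Shapiro (cusp forms over a function field have support that is compact modulo the centre), the space of such cusp forms with prescribed central character and fixed level $K_{1}(\mathfrak{R}')$ is finite-dimensional. This space therefore contains only finitely many cuspidal $\pi$ with $\pi^{K_{1}(\mathfrak{R}')}\neq0$. Injectivity of $\rho\mapsto\pi(\rho)$ then gives the finiteness of isomorphism classes of $\rho$.

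I expect the main obstacle to be the conductor compatibility in the second step. One has to check that Lafforgue's correspondence matches conductors at the ramified places, which requires the compatibility with $\varepsilon$-factors furnished by \cite{MR1875184} and local Langlands. One also has to check that fixing $\det\rho=\chi$ really pins down the central character, so that the centre does not produce infinitely many unitary twists. Since $\chi$ is fixed, there is no such twisting freedom.
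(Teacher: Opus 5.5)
Your proposal is correct and takes the same route as the paper. The paper gives no argument of its own: it cites Lafforgue's correspondence together with the Harder and Gelfand--Piatetski-Shapiro finite-dimensionality of cusp forms of fixed level and central character (via Esnault--Kerz, Section 4.3), and you have spelled out the intermediate steps (Swan-to-Artin conductor bound, conductor matching via $\varepsilon$-factors, newforms, injectivity of $\rho\mapsto\pi(\rho)$) correctly, except that in (3) the attribution is slightly off: injectivity needs only Chebotarev, Brauer--Nesbitt and irreducibility of $\rho$, while strong multiplicity one is what makes $\pi(\rho)$ well defined.
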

	
We prove the following result, which shows that Conjecture \ref{dejong} implies Conjecture \ref{Kharemoonconjectre}.
	
	\begin{Thm}\label{realmaintheorem}
    Let $n$ be a positive integer and let $X$ be a geometrically connected, smooth curve over $k$ with function field $K$. Assume that Conjecture \ref{dejong} holds for continuous representations of dimension $r \le n$. Let $\mathfrak{R}$ be an effective divisor of $K$. Then there are only finitely many isomorphism classes of continuous semisimple representations 
		\[
		\overline{\rho}: \pi_{1}(X) \to \mathrm{GL}_n(\overline{\mathbb{F}}_p)
		\]
		such that the Artin conductor $\mathfrak{R}(\overline{\rho})$ of $\overline{\rho}$ divides $\mathfrak{R}$ and $\overline{\rho}(\pi_{1}(X)) = \overline{\rho}(\pi_{1}(\overline{X}))$.
	\end{Thm}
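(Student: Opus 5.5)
Since $\overline{\rho}$ is semisimple, it is a direct sum of irreducible representations of dimensions $r\le n$. The Artin conductor is additive in direct sums, so each summand again has conductor dividing $\mathfrak{R}$. The condition $\overline{\rho}(\pi_1(X))=\overline{\rho}(\pi_1(\overline X))$ passes to each summand, because the image of a summand is a quotient of the image of $\overline{\rho}$. It therefore suffices to prove finiteness for irreducible $\overline{\rho}$ of each dimension $r\le n$. Moreover, if $\overline{\rho}(\pi_1(X))=\overline{\rho}(\pi_1(\overline X))$ and $\overline{\rho}$ is irreducible, then $\overline{\rho}|_{\pi_1(\overline X)}$ has the same image and is irreducible. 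Since this image is finite and $\overline{\mathbb F}_p$ is algebraically closed, the restriction is in fact absolutely irreducible. I would then shrink $X$ to the affine open $U=X\setminus\operatorname{supp}(\mathfrak{R})$; we may remove at least one point, so $U$ is affine. Every such $\overline{\rho}$ is unramified outside $\mathfrak{R}$, so it factors through $\pi_1(U)$. Also $\pi_1(\overline U)\to\pi_1(\overline X)$ is surjective, so absolute irreducibility on $\pi_1(\overline U)$ is preserved.

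Next I control the determinant. The character $\det\overline{\rho}$ has conductor bounded by $\mathfrak{R}$ and is geometric, since its image equals the image of $\pi_1(\overline X)$. By class field theory there are only finitely many such characters; geometricity removes the infinite family of twists by unramified characters of $G_k$. I fix one such character $\overline\chi$ and take its Teichmüller lift $\chi=[\overline\chi]$, a finite-order character with values in $W(\mathbb F)^\times$ for $\mathbb F$ large enough. Each $\overline{\rho}$ is defined over some finite field $\mathbb F$. By hypothesis (2), i.e.\ Conjecture \ref{dejong} for dimension $r$, we can apply Theorem \ref{lifting} on $U$ with $\eta=\chi$. This produces a lift $\rho:\pi_1(U)\to\mathrm{GL}_r(\overline{\mathbb Q}_p)$ with $\det\rho=\chi$, and $\rho$ is irreducible because its reduction is.

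Now I bound the Swan conductor of the lift. By Lemma \ref{swanconductorisinvariant}, $\operatorname{Sw}(\rho)=\operatorname{Sw}(\overline{\rho})$, and $\operatorname{Sw}(\overline{\rho})$ divides $\mathfrak{R}(\overline{\rho})$, which divides $\mathfrak R$. The lift $\rho$ is a representation of $\pi_1(U)$, which need not extend to $\pi_1(X)$. I therefore apply Theorem \ref{Lafforgue} to the curve $U$ with a suitable bounding divisor $\mathfrak R'$, for instance $\mathfrak{R}$ itself. This gives finitely many isomorphism classes of such $\rho$ with $\det\rho=\chi$ and Swan conductor bounded by $\mathfrak R'$. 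Each $\overline{\rho}$ is the semisimplified reduction of one of them. By Brauer–Nesbitt its isomorphism class is determined by the class of $\rho$, so only finitely many $\overline{\rho}$ occur. Ranging over the finitely many choices of $\chi$ and of $r$ finishes the argument.

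The main obstacle is the bookkeeping at the points of $\mathfrak R$. Theorem \ref{Lafforgue} has to be applied on $U$, where the lifts live. Tame ramification at the removed points is not bounded by the Swan conductor, but the statement of Theorem \ref{Lafforgue} only uses the Swan conductor and fixed determinant. I would rely on it as stated, with $X$ replaced by $U$ and boundedness of $\operatorname{Sw}$ at the points of $\mathfrak R$ coming from the residual bound. If a finite-type result of this form is not literally available, I would instead use the finiteness of tame inertia types: the orders of the residual tame inertia images at the points of $\mathfrak R$ are bounded, so the lifts' tame monodromy has a bounded finite part. I would then verify that the argument cited for Theorem \ref{Lafforgue} still applies.
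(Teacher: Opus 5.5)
Your proposal is correct and is essentially the paper's proof. You pass to an affine open and to irreducible summands, lift via Theorem \ref{lifting} with the Teichm\"uller lift of $\det\overline{\rho}$ (finitely many choices by class field theory plus geometricity), transfer the Swan bound with Lemma \ref{swanconductorisinvariant}, and conclude with Theorem \ref{Lafforgue} on the affine curve; you also usefully spell out the check of hypothesis (3) from the image condition and the Brauer--Nesbitt step, which the paper leaves implicit. The ``obstacle'' in your last paragraph is not a real one, so the fallback can be dropped: the paper applies Theorem \ref{Lafforgue} on the affine curve exactly as you do, and because the ramified places form a fixed finite set, the tame part $\dim V/V^{I}\le r$ of the Artin conductor is automatically bounded.
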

	\begin{proof}
		It suffices to prove the theorem in the case where $X$ is affine. Indeed, choose a nonempty affine open subscheme $U\subset X$. The open immersion $U\hookrightarrow X$ induces a natural surjection $\pi_1(U)\twoheadrightarrow \pi_1(X)$. Thus every representation of $\pi_1(X)$ may be regarded, by pullback, as a representation of $\pi_1(U)$ with the same image and bounded Artin conductor. 
		
		 Let $\overline{\rho}: \pi_{1}(X) \to \mathrm{GL}_n(\overline{\mathbb{F}}_p)$ be a continuous semisimple representation satisfying the given conditions. Since $\overline{\mathbb{F}}_p$ is algebraically closed, $\overline{\rho}$ decomposes into a direct sum of absolutely irreducible representations. Since each irreducible component also satisfies the given conditions in its respective dimension $r \leq n$, we may assume that $\overline{\rho}$ is absolutely irreducible. 
		By compactness, $\overline{\rho}$ has finite image and hence we can assume that $\overline{\rho}(\pi_{1}(X)) \subset \mathrm{GL}_{n}(\mathbb{F})$ for some finite subfield $\mathbb{F} \subset \overline{\mathbb{F}}_{p}$. Let $\eta:\pi_{1}(X)\to W(\mathbb{F})^{\times}$ be the Teichmüller lift of $\det(\overline{\rho})$. By our assumption and Theorem \ref{lifting}, we can find a continuous lift $\rho:\pi_{1}(X)\to \mathrm{GL}_{n}(\mathcal{O})$ of $\overline{\rho}$ with $\det(\rho) = \eta$ where $\mathcal{O}$ is the ring of integers of a finite extension of the fraction field of $W(\mathbb{F})$.
		
		Now, Lemma \ref{swanconductorisinvariant} implies that $\operatorname{Sw}(\rho) = \operatorname{Sw}(\overline{\rho})$. 
		Since the Artin conductor bounds the Swan conductor (i.e., $\operatorname{Sw}(\overline{\rho})$ divides $\mathfrak{R}(\overline{\rho})$), it follows that $\operatorname{Sw}(\rho)$ divides $\mathfrak{R}$. Given that there are only finitely many geometric abelian extensions of $K$ with Artin conductor dividing $\mathfrak{R}$ (see \cite[Section 4]{MR1838373}), there are only finitely many possibilities for the determinant $\eta$. Invoking Theorem \ref{Lafforgue}, we conclude that there are only finitely many such lifts $\rho$. Consequently, there are only finitely many such representations $\overline{\rho}$, which completes the proof.
	\end{proof}

	Finally, we prove our main theorem as follows.
	\begin{proof}[Proof of Theorem \ref{main}]
		Let $C$ be the smooth projective curve over the finite field $k$ with function field $K$, and put
		\[
		X=C\setminus \operatorname{Supp}(\mathfrak R),
		\]
			where $\operatorname{Supp}(\mathfrak R)$ denotes the support of the effective divisor $\mathfrak R$. Every representation $\overline{\rho}:G_{K}\to \mathrm{GL}_{n}(\overline{\mathbb{F}}_{p})$ of $G_K$ whose Artin conductor divides $\mathfrak R$ is unramified outside $\operatorname{Supp}(\mathfrak R)$ and hence factors through $\pi_1(X)$. Note also that the condition that the fixed field of the kernel of $\overline{\rho}$ contains no nontrivial constant field extension of $K$ is equivalent to
		\[
		\overline{\rho}(\pi_1(X))=\overline{\rho}(\pi_1(\overline{X})).
		\]
		By Theorem \ref{dejonggait}, Conjecture \ref{dejong} holds in all dimensions when $p$ is odd and in dimension $2$ for arbitrary $p$. Thus, the theorem follows from Theorem \ref{realmaintheorem}. 
	\end{proof}

\section*{Acknowledgments}
The author thanks the anonymous referee for pointing out \cite[Proposition E.10.1]{MR3762002}, as well as Lemma 6.4 and Theorem 6.8 in \cite{MR4014290}.

	\bibliographystyle{plain}
	\bibliography{FM}

\end{document}